\title{A refined count of Coxeter element reflection factorizations}
\author{Elise del Mas}
\address{School of Mathematics, University of Minnesota, Minneapolis, Minnesota 55455}
\author{Thomas Hameister}
\address{Dept. of Mathematics, University of Wisconsin, Madison, Wisconsin 53706}
\author{Victor Reiner}
\address{School of Mathematics, University of Minnesota, Minneapolis 55455}
\newtheorem*{rep@theorem}{\rep@title}
\newcommand{\newreptheorem}[2]{%
\newenvironment{rep#1}[1]{%
 \def\rep@title{#2 \ref{##1}}%
 \begin{rep@theorem}}%
 {\end{rep@theorem}}}
\theoremstyle{plain}
\newtheorem{thm}{Theorem}[section]
\newtheorem{prop}[thm]{Proposition}
\theoremstyle{definition}
\theoremstyle{remark}
\theoremstyle{plain}
\newtheorem{cor}[thm]{Corollary}
\def\R{\mathbb R}
\def\N{\mathbb N}
\def\C{\mathbb C}
\def\Z{\mathbb Z}
\def\RR{{\mathcal R}}
\def\reg{{\operatorname{reg}}}
\def\ab{{\operatorname{ab}}}
\newcommand\chihook[2]{\chi_{{}_{#1}\mathfrak{h}^n_{#2}}}
\begin{document}

\begin{abstract}
For well-generated complex reflection groups,
Chapuy and Stump gave a simple product for  
a generating function counting
reflection factorizations of a Coxeter element by 
their length.  This is refined here to
record the number
of reflections used from each orbit of hyperplanes.
The proof is case-by-case via the classification of
well-generated groups.  It implies a new expression for
the Coxeter number, expressed via data coming from
a hyperplane orbit;  a case-free proof 
of this due to J. Michel is included.
\end{abstract}

\maketitle

\section{Introduction}

A {\it complex reflection group} is a finite subgroup $W$ of $GL(V)$, where $V=\C^n$, 
generated by the set of all {\it reflections} $t$ in $W$, that is, the
elements $t$ whose fixed space $V^t:=\ker(t - 1)$ is a {\it hyperplane} $H$, meaning
$\dim H= n-1$.
Let $\RR$ denote the set of all reflections in $W$, and $\RR^*$ the collection of all reflecting hyperplanes.
An important numerological role is played by the cardinalities of $\RR, \RR^*$, denoted $N, N^*$, respectively. 

This paper focusses on the complex reflection groups $W$ which act irreducibly on $V=\C^n$,
and which are {\it well-generated} in the sense that they can be generated by $n$ reflections.  
For such group $W$, one can define the {\it Coxeter number} as $h:=\frac{N+N^*}{n}$, and
then the {\it Coxeter elements} $c$ in $W$ are the elements $c$ which have at least one eigenvector $v$ in $V^\reg:= V \setminus \cup_{H \in \RR^*} H$ with eigenvalue $\zeta_h:=e^{{2 \pi i}{h}}$.
It is known that there is only one conjugacy class of Coxeter elements $c$;
 see, for example, \cite{bessis2006finite, chapuy2014counting}.  

Having fixed one Coxeter element $c$, one can ask for the number 
$f_\ell$ counting reflection factorizations of $c$ having length $\ell$, that is, sequences
$(t_1,\ldots,t_\ell) \in \RR^\ell$ for which 
$
c=t_1 t_2 \cdots t_\ell.
$
The main result of Chapuy and Stump \cite{chapuy2014counting} is the following amazingly 
simple product formula for its exponential generating function:

\begin{equation}
\label{Chapuy-Stump-theorem}
\sum_{\ell \geq 0} f_\ell \frac{x^\ell}{\ell!} = \left( e^{\frac{N}{n}x} - e^{\frac{-N^*}{n}x} \right)^n.
\end{equation}
In particular, this power series starts at $x^n$, because shortest factorizations of $c$ have length $n$.

Our main result refines \eqref{Chapuy-Stump-theorem}, 
accounting for how many reflections $t_j$ appearing in
$c=t_1 t_2 \cdots t_\ell$
have their reflecting hyperplane $\ker(t_j-1)$ lying in
the various $W$-orbits $\RR^*_1,\ldots,\RR^*_p$ decomposing  $\RR^*=\sqcup_{i=1}^p \RR^*_i$.  
Stating it requires some numerology associated to each orbit $\RR^*_i $ for $i=1,2,\ldots,p$. 
Let $\RR_i$ denote the subset of reflections whose reflecting hyperplane lies in $\RR^*_i$,
so $\RR=\sqcup_{i=1}^p \RR_i$. Define
$$
N_i:=\#\RR^*_i, \quad
N^*_i:=\#\RR_i, \quad
\text{ and }\quad
n_i:=\# \left\{ \begin{matrix} j \in \{1,2,\ldots,n\}: t_j \in \RR_i,  \text{ for any }\\
                          \text{ {\it shortest} factorization } 
                          c=t_1 t_2 \cdots t_n \end{matrix} \right\}
$$
It is not obvious that these numbers $n_i$ are well-defined, independent of the choice of
a length $n$ factorization for $c$, but this follows from work of Bessis \cite[Prop. 7.6]{bessis2006finite}, who showed that any two such shortest factorizations can be connected by
a sequence of {\it Hurwitz moves}
\begin{equation}
\label{Hurwitz-move}
(t_1,t_2,\ldots, \,\,\, t_k, t_{k+1}   \,\,\,  ,\ldots,t_{n-1},t_n) \longmapsto 
(t_1,t_2,\ldots,  \,\,\,  t_k t_{k+1} t_k^{-1}  ,t_k  \,\,\,  ,\ldots,t_{n-1},t_n) 
\end{equation}

Let
$
f_{\ell_1,\ell_2,\ldots,\ell_p}
$
be the number of tuples $(t_1,\ldots,t_\ell)$ factoring $c=t_1 t_2 \cdots t_\ell$
having the first $\ell_1$ reflections $t_1,t_2,\ldots,t_{\ell_1}$  in $\RR_1$,
the next $\ell_2$ reflections in $\RR_2$, etc.  (so $\ell=\sum_{i=1}^p \ell_i$).
One can show using the Hurwitz moves above (or see Proposition~\ref{Frobenius} below), 
that 
$
f_{\ell_1,\ell_2,\ldots,\ell_p}
$
also counts factorizations in which the elements of $\RR_i$ occur in {\it any} prescribed set of
the $\ell_i$ positions, rather than all $t_j$ in $\RR_1$ first, then $\RR_2$ second, etc.

\begin{thm}
\label{main-theorem}
For any irreducible, well-generated complex reflection group, and notation as above, one has
$$
\sum_{(\ell_1,\ldots,\ell_p) \in \N^p}
  f_{\ell_1,\ldots,\ell_p} \frac{x_1^{\ell_1} \cdots x_p^{\ell_p}}{\ell_1! \cdots \ell_p!}
 =\frac{1}{\#W} \prod_{i=1}^p \left( e^{\frac{N_i}{n_i}x_i} - e^{-\frac{N^*_i}{n_i}x_i} \right)^{n_i}.
$$
\end{thm}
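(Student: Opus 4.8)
The plan is to convert the identity of Theorem~\ref{main-theorem} into a statement about the character table of $W$, and then verify that statement case by case along the Shephard--Todd classification of irreducible well-generated complex reflection groups. \emph{Step 1 (reduction to characters).} Each $\RR_i$ is a union of conjugacy classes, so the class sum $\sigma_i:=\sum_{t\in\RR_i}t$ is central in $\C W$ and acts on the simple module affording $\chi$ by the scalar $\omega_\chi(\RR_i):=\chi(1)^{-1}\sum_{t\in\RR_i}\chi(t)$. As the $\sigma_i$ commute, the coefficient of $c$ in $\sigma_1^{\ell_1}\cdots\sigma_p^{\ell_p}$ does not see the order of the factors --- this is the promised independence from which positions carry reflections of which orbit --- and extracting that coefficient by orthogonality of characters gives Proposition~\ref{Frobenius},
\[
f_{\ell_1,\dots,\ell_p}=\frac{1}{\#W}\sum_{\chi\in\operatorname{Irr}(W)}\chi(1)\,\overline{\chi(c)}\prod_{i=1}^{p}\omega_\chi(\RR_i)^{\ell_i}.
\]
Multiplying by $\prod_i x_i^{\ell_i}/\ell_i!$ and summing, and cancelling the two occurrences of $\tfrac1{\#W}$, the theorem becomes equivalent to the exponential-polynomial identity
\begin{equation}\label{eq:star}
\sum_{\chi\in\operatorname{Irr}(W)}\chi(1)\,\overline{\chi(c)}\prod_{i=1}^{p}e^{\omega_\chi(\RR_i)x_i}\;=\;\prod_{i=1}^{p}\bigl(e^{N_ix_i/n_i}-e^{-N^*_ix_i/n_i}\bigr)^{n_i}.
\end{equation}

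\emph{Step 2 (what must be checked).} Expanding the right-hand side of \eqref{eq:star} by the binomial theorem turns it into $\sum_{\mathbf k}\bigl(\prod_i(-1)^{n_i-k_i}\binom{n_i}{k_i}\bigr)\prod_i e^{a_i(k_i)x_i}$, summed over all $\mathbf k=(k_1,\dots,k_p)$ with $0\le k_i\le n_i$, where $a_i(k):=\tfrac{N_i+N^*_i}{n_i}\,k-N^*_i$. Since $k\mapsto a_i(k)$ is strictly increasing, the exponent tuples $(a_1(k_1),\dots,a_p(k_p))$ are pairwise distinct; comparing coefficients of monomial exponentials, \eqref{eq:star} is equivalent to the pair of assertions that no $\chi$ with $\chi(c)\ne0$ has $(\omega_\chi(\RR_1),\dots,\omega_\chi(\RR_p))$ outside this set of tuples, and that for every $\mathbf k$
\[
\sum_{\chi\,:\ \omega_\chi(\RR_i)=a_i(k_i)\ \text{for all }i}\chi(1)\,\overline{\chi(c)}\;=\;\prod_{i=1}^{p}(-1)^{n_i-k_i}\binom{n_i}{k_i}
\]
(a single fibre may contain several characters, and only the weighted sum need match). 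Two structural facts guide the verification. First, $c$ being a $\zeta_h$-regular element, Springer's theory of regular elements computes $\chi(c)$ from the graded multiplicities of $\chi$ in the coinvariant algebra --- essentially the fake-degree polynomial of $\chi$ at a primitive $h$-th root of unity --- so $\chi(c)$ vanishes outside a short, explicit family in each type. Second, summing the non-unit eigenvalue $\zeta_t$ of each reflection over all of $\RR$ gives $\sum_{t\in\RR}\zeta_t=-N^*$; from this one finds that the character $\psi$ of the $j$-th exterior power $\bigwedge^{j}V$ of the reflection representation satisfies $\sum_{t\in\RR}\psi(t)=\binom{n}{j}(N-hj)$, and these values --- with their per-orbit refinements --- are the source of the arithmetic progressions $a_i(0),\dots,a_i(n_i)$. (Specializing \eqref{eq:star} to $x_1=\dots=x_p$ recovers Chapuy--Stump's \eqref{Chapuy-Stump-theorem}, and comparing the two factorizations gives the promised new expression $h=(N_i+N^*_i)/n_i$ for the Coxeter number, for which J.~Michel's case-free proof appears below.)

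\emph{Step 3 (the case analysis, and the main obstacle).} For the infinite families $G(1,1,n)=S_n$, $G(d,1,n)$, and $G(e,e,n)$ --- the last subsuming the dihedral groups $G(e,e,2)$ --- the orbit decomposition $\RR=\sqcup_i\RR_i$ and the multiplicities $n_i$, the characters that survive on $c$, and the numbers $\chi(c)$ and $\omega_\chi(\RR_i)$ are all controlled by the combinatorics of (multi)partitions, the wreath-product form of the Murnaghan--Nakayama rule, and Clifford theory, and the fibre identities of Step~2 follow from a symmetric-function computation uniform in the parameters. The remaining, finitely many exceptional groups are treated one at a time: the rank-$2$ complex reflection groups directly, and the higher-rank exceptionals $H_3,H_4,F_4,E_6,E_7,E_8$ and $G_{24},G_{25},G_{26},G_{27},G_{29},G_{32},G_{33},G_{34}$ from their character tables, with computer assistance for the largest. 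As already with Chapuy--Stump's formula \eqref{Chapuy-Stump-theorem}, the essential point is that no case-free proof of \eqref{eq:star} is available, so the substance of the argument is the verification itself; its two most laborious parts are to organize a single uniform treatment of the three infinite families --- keeping careful track of the orbit splitting, of the numbers $n_i$ (well defined by Bessis's Hurwitz-transitivity of the reduced factorizations of $c$), and of precisely which $\chi$ are nonzero on $c$ --- and to carry through the explicit computation for the largest exceptional groups, where the sizes of the character tables of $E_8$ and $G_{34}$ make a computer algebra system indispensable.
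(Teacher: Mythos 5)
Your strategy --- Frobenius's character-theoretic factorization count followed by a case-by-case verification over the Shephard--Todd classification --- is exactly the approach of the paper, so in outline you are on the right track; but two points deserve attention. First, you miss the reduction that makes the verification short: by the classification, an irreducible well-generated group has $p=1$ or $p=2$ hyperplane orbits, and when $p=1$ the statement of Theorem~\ref{main-theorem} is verbatim the Chapuy--Stump formula \eqref{Chapuy-Stump-theorem}, which can simply be quoted rather than re-proved. Consequently most of your Step~3 is unnecessary: $S_n=G(1,1,n)$, $G(e,e,n)$ for $n\ge 3$, $H_3$, $H_4$, $E_6$, $E_7$, $E_8$, and all exceptional groups except $G_5$, $G_6$, $G_9$, $G_{10}$, $G_{14}$, $G_{17}$, $G_{18}$, $G_{21}$, $G_{26}$, and $G_{28}=F_4$ have a single hyperplane orbit, and the only infinite families genuinely requiring work are $G(r,1,n)$ with $r\ge 2$ and the even dihedral groups $G(m,m,2)$. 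Second, in a proof whose entire content is verification, your Step~3 asserts rather than performs the computations: saying the fibre identities ``follow from a symmetric-function computation uniform in the parameters'' is not yet a proof. To complete the argument along your lines you must, for each remaining family, exhibit the irreducible characters that do not vanish on $c$ (for $G(r,1,n)$ this is the explicit two-parameter family used by Chapuy--Stump), record $\chi(c^{-1})$ and the normalized orbit sums $\chi(\RR_i)/\chi(1)$, and carry out the resummation, e.g.\ obtaining $\bigl(e^{(r-1)nx}-e^{-nx}\bigr)\bigl(e^{\frac{nr}{2}y}-e^{-\frac{nr}{2}y}\bigr)^{n-1}$ for $G(r,1,n)$; the handful of exceptional $p=2$ groups are then checked from their character tables by computer, as in the paper. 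Your Step~2 coefficient-matching framework is sound but not needed: once the surviving characters and their $\omega$-values are listed, the Frobenius sum resums directly into the desired product of exponentials.
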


\noindent
Our proof is the same as Chapuy and Stump's proof of \eqref{Chapuy-Stump-theorem}, via the classification\footnote{It should be noted that, at least for
the case of crystallographic real reflection groups (Weyl groups), J. Michel \cite{jm} has also produced a case-free derivation of \eqref{Chapuy-Stump-theorem}, via properties of Deligne-Lusztig representations.}
of irreducible, well-generated reflection groups, and Frobenius's 
character-theoretic technique for counting factorizations, reviewed in Section~\ref{Frobenius-section}.  
Since there is little novelty in the methods, the proof in Section~\ref{proof-section} is abbreviated as much as possible.

One {\bf caveat}: The phrasing of Theorem~\ref{main-theorem}, while convenient, may seem deceptively general, since 
the classification of irreducible complex reflection groups shows that $p=1$ or $2$ in every case. When $p=1$, 
Theorem~\ref{main-theorem} is the same as \eqref{Chapuy-Stump-theorem}, giving
no further information.  The remaining cases where $p=2$ are listed in the table
below, with the factorization in the theorem shown, using variables $(x,y)$ instead of
$(x_1, x_2)$:

\begin{center}
\begin{tabular}{|c|c|c|}\hline \hline
$W$ 
& $\begin{matrix}\text{Coxeter-Shephard}\\\text{diagram}\end{matrix}$
& $\# W \cdot \displaystyle \sum_{(\ell_1,\ell_2)} f_{\ell_1,\ell_2} \frac{x^{\ell_1} y^{\ell_2}}{\ell_1! \ell_2!}$ \\ \hline
$\begin{matrix}G(r,1,n)\\ r \geq 2\end{matrix}$ & 
\xymatrix @-1pc {
*+[Fo]{r}\ar@{-}[r]^{4} & *+[Fo]{2} \ar@{-}[r] & *+[Fo]{2}\ar@{-}[r] &\cdots \ar@{-}[r]&*+[Fo]{2} }
&
$\Big(e^{(r-1)nx} - e^{-nx}\Big)\Big( e^{\frac{nr}{2}y} - e^{-\frac{nr}{2}y} \Big)^{n-1}$
\\ \hline

$\begin{matrix}G(m,m,2) \\ m \geq 4,\text{ even}\end{matrix}$ & 
\entrymodifiers={+[o][F-]}
\xymatrix @-1pc {
2\ar@{-}[r]^{m} & 2 }
&
$\Big(e^{\frac{m}{2}x} - e^{-\frac{m}{2}x}\Big)\Big( e^{\frac{m}{2}y} - e^{-\frac{m}{2}y} \Big)$
\\ \hline

$G_5$ & 
\entrymodifiers={+[o][F-]}
\xymatrix @-1pc {
3\ar@{-}[r]^{4} & 3 }
&
$\Big( e^{8x} - e^{-4x} \Big)\Big( e^{8y} - e^{-4y} \Big)$
\\ \hline

$G_6$ &
\entrymodifiers={+[o][F-]}
\xymatrix @-1pc {
2\ar@{-}[r]^{6} & 3 }
&
$\Big( e^{6x} - e^{-6x} \Big) \Big( e^{8y} - e^{-4y} \Big)$
\\ \hline

$G_9$ &
\entrymodifiers={+[o][F-]}
\xymatrix @-1pc {
2\ar@{-}[r]^{6} & 4 } &
$\Big( e^{12x} - e^{-12x} \Big) \Big( e^{18y} - e^{-6y} \Big)$
\\ \hline

$G_{10}$ &
\entrymodifiers={+[o][F-]}
\xymatrix @-1pc {
3\ar@{-}[r]^{4} & 4 }
&
$\Big( e^{16x} - e^{-8x} \Big)\Big( e^{18y} - e^{-6y} \Big)$
\\ \hline

$G_{14}$ &
\entrymodifiers={+[o][F-]}
\xymatrix @-1pc {
3\ar@{-}[r]^{8} & 2 }
&
$\Big( e^{16x} - e^{-8x} \Big)\Big( e^{12y} - e^{-12y} \Big)$
\\ \hline

$G_{17}$ &
\entrymodifiers={+[o][F-]}
\xymatrix @-1pc {
2\ar@{-}[r]^{6} & 5 }
&
$\Big( e^{30x} - e^{-30x} \Big)\Big( e^{48y} - e^{-12y} \Big)$
\\ \hline

$G_{18}$ &
\entrymodifiers={+[o][F-]}
\xymatrix @-1pc {
3\ar@{-}[r]^{4} & 5 }
&
$\Big( e^{40x} - e^{-20x} \Big)\Big( e^{48y} - e^{-12y} \Big)$
\\ \hline

$G_{21}$ &
\entrymodifiers={+[o][F-]}
\xymatrix @-1pc {
2\ar@{-}[r]^{10} & 3 }
&
$\Big( e^{30x} - e^{-30x} \Big)\Big( e^{40y} - e^{-20y} \Big)$
\\ \hline

$G_{26}$ &
\entrymodifiers={+[o][F-]}
\xymatrix @-1pc {
3\ar@{-}[r] & 3\ar@{-}[r]^4 & 2 }
&
$\Big( e^{12x} - e^{-6x} \Big)^2\Big( e^{9y} - e^{-9y} \Big)$
\\ \hline

$G_{28}$ &
\entrymodifiers={+[o][F-]}
\xymatrix @-1pc {
2\ar@{-}[r] & 2\ar@{-}[r]^{4} & 2\ar@{-}[r] & 2}
&
$\Big( e^{6x} - e^{-6x} \Big)^2\Big( e^{6y} - e^{-6y} \Big)^2$
\\ \hline

\end{tabular}
\end{center}
\noindent
The second column is the {\it Coxeter-Shephard diagram} for these groups, reflecting the case-by-case
observation that irreducible, well-generated groups $W$ with $p =2$ 
are all {\it Shephard groups}, that is, symmetry groups of {\it regular
complex (or real) polytopes}.  This implies (see \cite{Coxeter}) that they have a {\it Shephard presentation}
$$
W = \left \langle S=\{s_1,\ldots,s_n\} \quad | \quad
 s_i^{p_i}=1, \quad \underbrace{s_is_js_is_j \cdots}_{m_{ij}\text{ factors}} = \underbrace{s_js_is_js_i\cdots}_{m_{ij}\text{ factors}} \right\rangle
$$
where here the integer $p_i \geq 2$ labels the node for $s_i$, and the integer $m_{ij} \geq 2$ labels the
edge from $s_i$ to $s_j$, with $m_{ij}=2$ whenever $|i-j| \geq 2$ (and no edge from $s_i$ to $s_j$ is shown).  It is known for Coxeter groups and Shephard groups, 
one can choose $s_1,\ldots,s_n$ so that their
product factors a Coxeter element $c=s_1 s_2 \cdots s_n$.   The hyperplane orbits
$\RR^*_1, \RR^*_2$ correspond to the connected components obtained when one erases the edges with
even labels $m_{ij}$ in the Coxeter-Shephard diagram, and in this case,  $n_1,n_2$ may be re-interpreted as 
the number of nodes in the corresponding connected component.

We also explain (Proposition~\ref{specialization-prop}) 
how Theorem~\ref{main-theorem} necessarily specializes to recover
\eqref{Chapuy-Stump-theorem}.  Comparing the two results then 
gives our first proof  of the following seemingly new fact about
the Coxeter number $h$.

\begin{cor}
\label{hyperplane-orbits-know-h}
For irreducible, well-generated complex reflection groups, and notation as above, 
each hyperplane orbit $\RR^*_i$ for $i=1,2,\ldots,p$ satisfies 
$$
h = \frac{N_i+N^*_i}{n_i}.
$$
\end{cor}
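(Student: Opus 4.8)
The plan is to derive the identity $h = (N_i + N^*_i)/n_i$ by comparing the fully refined generating function of Theorem~\ref{main-theorem} with the Chapuy--Stump formula \eqref{Chapuy-Stump-theorem}, exploiting the fact that the former must specialize to the latter. Concretely, set all the variables equal, $x_1 = x_2 = \cdots = x_p = x$, in the identity of Theorem~\ref{main-theorem}. On the left-hand side, the sum $\sum f_{\ell_1,\ldots,\ell_p} x^{\ell_1 + \cdots + \ell_p}/(\ell_1! \cdots \ell_p!)$ collapses, after collecting terms of total degree $\ell = \sum_i \ell_i$ and using the multinomial identity $\sum_{\ell_1 + \cdots + \ell_p = \ell} \binom{\ell}{\ell_1, \ldots, \ell_p} f_{\ell_1,\ldots,\ell_p} = f_\ell$ (this last equality is exactly the combinatorial content of the remark preceding Theorem~\ref{main-theorem}, that $f_{\ell_1,\ldots,\ell_p}$ counts factorizations with the $\RR_i$-reflections in \emph{any} prescribed positions), into $\sum_\ell f_\ell\, x^\ell/\ell!$. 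Hence this specialization of Theorem~\ref{main-theorem} yields
\[
\sum_{\ell \geq 0} f_\ell \frac{x^\ell}{\ell!}
= \frac{1}{\#W} \prod_{i=1}^p \left( e^{\frac{N_i}{n_i}x} - e^{-\frac{N^*_i}{n_i}x} \right)^{n_i}.
\]
Comparing with \eqref{Chapuy-Stump-theorem}, which reads $\left( e^{\frac{N}{n}x} - e^{-\frac{N^*}{n}x}\right)^n$, and recalling $n = \sum_i n_i$, $N = \sum_i N^*_i$, $N^* = \sum_i N_i$ (note the bookkeeping swap in the paper's notation: $N_i = \#\RR^*_i$ counts hyperplanes while $N^*_i = \#\RR_i$ counts reflections), we obtain an identity between two products of exponential polynomials.

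The next step is to extract the numerical relations from this identity of analytic functions. Each factor $e^{\frac{N_i}{n_i}x} - e^{-\frac{N^*_i}{n_i}x}$ is an entire function with a simple zero at $x = 0$; its leading term as $x \to 0$ is $\left(\frac{N_i}{n_i} + \frac{N^*_i}{n_i}\right) x = \frac{N_i + N^*_i}{n_i} x$. Therefore the product on the right-hand side of Theorem~\ref{main-theorem}'s specialization has a zero of order $\sum_i n_i = n$ at the origin, with leading coefficient $\frac{1}{\#W}\prod_i \left(\frac{N_i + N^*_i}{n_i}\right)^{n_i}$, while the Chapuy--Stump product has a zero of order $n$ with leading coefficient $\left(\frac{N+N^*}{n}\right)^n = h^n$. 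More importantly, one can factor each exponential difference over $\C$: writing $a_i = N_i/n_i$ and $b_i = N^*_i/n_i$, we have $e^{a_i x} - e^{-b_i x} = e^{-b_i x}(e^{(a_i+b_i)x} - 1) = e^{-b_i x}\prod_{k}\bigl(\text{factors from } e^{(a_i+b_i)x}-1\bigr)$, so the set of zeros of the $i$-th factor is precisely $\frac{2\pi i}{a_i + b_i}\Z = \frac{2\pi i\, n_i}{N_i + N^*_i}\Z$. Matching the zero sets (with multiplicity) of the two sides of the identity forces the multiset $\{a_i + b_i \text{ with multiplicity } n_i : i = 1,\ldots,p\}$ to equal $\{h \text{ with multiplicity } n\}$; since each $a_i + b_i = \frac{N_i+N^*_i}{n_i}$ and $\sum n_i = n$, this is possible only if $a_i + b_i = h$ for every $i$, which is the assertion.

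The main obstacle is making the zero-matching argument airtight: I must rule out the possibility that, for instance, the $e^{-b_i x}$ prefactors conspire across different $i$ with an overall constant to produce a coincidental identity with different individual ratios $a_i + b_i$. The cleanest way around this is to compare the two entire functions' divisors directly: the divisor (zero locus with multiplicity) of the left side is $\sum_{i=1}^p n_i \cdot \mathrm{div}\bigl(e^{(a_i+b_i)x}-1\bigr) = \sum_i n_i \cdot \bigl(\text{support on } \tfrac{2\pi i\, n_i}{N_i+N^*_i}\Z\bigr)$, since the exponential prefactors are nowhere-vanishing; the divisor of the right side is $n \cdot \mathrm{div}(e^{hx}-1)$, supported on $\frac{2\pi i}{h}\Z$. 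Looking at the zeros nearest the origin on each side, the smallest positive imaginary part occurring among the left-side zeros is $\min_i \frac{2\pi\, n_i}{N_i + N^*_i}$, and on the right side it is $\frac{2\pi}{h}$; and the total multiplicity at that nearest zero, summed over whichever $i$ achieve the minimum, must equal $n$. A short counting argument — using $\sum_i n_i = n$ together with the fact that the full divisors agree, not just the nearest zeros — then shows all the ratios coincide. I expect this divisor comparison to occupy the bulk of the write-up, but it is routine complex analysis once set up; everything upstream (the specialization, the multinomial collapse) is immediate from results already established in the paper, and the alternative case-free proof attributed to J.~Michel, promised in the abstract, provides an independent check.
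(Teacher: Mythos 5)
Your proposal is correct and follows the same global strategy as the paper's first proof: specialize Theorem~\ref{main-theorem} at $x_1=\cdots=x_p=x$, collapse the left side to $\sum_\ell f_\ell x^\ell/\ell!$ via the multinomial identity (this is exactly the paper's Proposition~\ref{specialization-prop}), equate with \eqref{Chapuy-Stump-theorem}, strip off the nowhere-vanishing exponential prefactors, and conclude that the multiset of exponents $\frac{N_i+N_i^*}{n_i}$ (with multiplicities $n_i$) must be $h$ repeated $n$ times. The only genuine divergence is the final uniqueness step: the paper stays entirely in formal power series, taking $\log\bigl(P(x)/(a_1\cdots a_p x^p)\bigr)$ and reading off the power sums $a_1^k+\cdots+a_p^k$, which determine the multiset, whereas you pass to entire functions and match zero divisors of $\prod_i(e^{c_ix}-1)^{n_i}$ against $(e^{hx}-1)^n$; your sketch does close (the nearest nonzero zero of the right side has multiplicity $n$, forcing every $c_i$ to lie in $h\Z_{\geq 1}$, while the nearest left zero forces $\max_i c_i\le h$), though it costs a remark that the formal identity of generating functions is an identity of entire functions, a step the paper's purely formal argument avoids. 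The $1/\#W$ discrepancy and the swap $N_i\leftrightarrow N_i^*$ you flag are artifacts of typos in the paper's own statements (the constant cancels from both sides and only the sums $N_i+N_i^*$ enter), so they do not affect either argument.
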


\noindent
Because it uses Theorem~\ref{main-theorem} and \eqref{Chapuy-Stump-theorem},  
this first proof of Corollary~\ref{hyperplane-orbits-know-h}
relies on case-by-case checks.  We also give
a second proof which is case-free, but applies only to real reflection groups, and 
a third proof for the general case supplied by J. Michel, proving a more general assertion 
about regular elements (Theorem~\ref{Michel-result}), which he has kindly allowed
us to reproduce here.

\section{Frobenius's method}
\label{Frobenius-section}

Frobenius gave a method, using character theory, for
counting factorizations of an element in any finite group $W$
as a product of elements from specified conjugacy-closed subsets.
Recall that (finite-dimensional, complex) representations $W \overset{\rho}{\rightarrow} GL(V)$
are determined up to equivalence by their character $\chi_\rho: W \rightarrow \C$ defined
by $\chi_\rho(w):=\mathrm{Trace}(V \overset{\rho}{\rightarrow} V)$. For subsets $A \subseteq W$,
define $\chi(A):=\sum_{w \in A} \chi(w)$.

\begin{prop}(Frobenius; see, e.g., \cite[Thm A.1.9]{LandoZvonkin})
\label{Frobenius}
For $A_1,\ldots,A_\ell$ subsets of a finite group $W$, with each $A_i$ closed under conjugation,
and $c$ in $W$, the number of factorizations $c=t_1 \cdots t_\ell$
with $t_i$ in $A_i$ equals
$$
\frac{1}{\# W} \sum_\chi \frac{ \chi(c^{-1}) \chi(A_1) \ldots \chi(A_\ell) }{ \chi(1)^{\ell-1} }
$$
where the sum is over all the characters $\chi$ of the inequivalent irreducible representations of G.
\end{prop}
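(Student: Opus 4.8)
The plan is to work inside the group algebra $\C[W]$ and extract the coefficient of $c$ from a product of class sums. For each conjugation-closed subset $A \subseteq W$, set $z_A := \sum_{t \in A} t \in \C[W]$; since $A$ is stable under conjugation, $z_A$ is a \emph{central} element of $\C[W]$. Expanding the product $z_{A_1} z_{A_2} \cdots z_{A_\ell} = \sum_{w \in W} N_w\, w$, the coefficient $N_c$ is exactly the number of factorizations being counted, so it suffices to compute this one coefficient.

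The key step is to evaluate how the central element $z_{A_1}\cdots z_{A_\ell}$ acts in each irreducible representation $\rho\colon W \to GL(V_\chi)$. Because $z_{A_i}$ is central, $\rho(z_{A_i}) := \sum_{t \in A_i}\rho(t)$ commutes with all of $\rho(W)$, hence by Schur's lemma is a scalar operator; taking traces identifies the scalar as $\chi(A_i)/\chi(1)$. Therefore $z_{A_1}\cdots z_{A_\ell}$ acts on $V_\chi$ as multiplication by $\prod_{i=1}^\ell \chi(A_i)/\chi(1)^\ell$. To convert this back into the coefficient $N_c$ I would use the regular representation: for any $u \in \C[W]$ one has $\operatorname{Tr}_{\mathrm{reg}}(u) = \sum_\chi \chi(1)\,\chi(u)$ (extending $\chi$ linearly to $\C[W]$ and using $\C[W] \cong \bigoplus_\chi V_\chi^{\oplus \chi(1)}$), while also $\operatorname{Tr}_{\mathrm{reg}}(c^{-1}u) = \#W \cdot [\text{coefficient of } c \text{ in } u]$. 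Applying both expressions to $u = z_{A_1}\cdots z_{A_\ell}$, and noting that $c^{-1}u$ acts on $V_\chi$ as $\bigl(\prod_i \chi(A_i)/\chi(1)^\ell\bigr)\rho(c^{-1})$ with trace $\bigl(\prod_i\chi(A_i)/\chi(1)^\ell\bigr)\chi(c^{-1})$, gives
\[
\#W \cdot N_c = \sum_\chi \chi(1) \cdot \frac{\chi(c^{-1}) \prod_{i=1}^\ell \chi(A_i)}{\chi(1)^\ell} = \sum_\chi \frac{\chi(c^{-1})\,\chi(A_1)\cdots\chi(A_\ell)}{\chi(1)^{\ell-1}},
\]
which is the claimed formula after dividing by $\#W$.

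There is no genuine obstacle here—this is Frobenius's classical identity, already attributed to the reference—so the only points requiring care are the standard structural inputs: Schur's lemma (to see that a central element acts as a scalar on an irreducible), the decomposition of the regular representation, and the linear bookkeeping of extending characters to $\C[W]$. The generality over arbitrary conjugation-closed subsets $A_i$ (rather than single conjugacy classes) costs nothing, since $\chi(A_i) = \sum_{t\in A_i}\chi(t)$ and the scalar-on-$V_\chi$ computation is linear in $A_i$. Alternatively, one can avoid the group algebra entirely: expand the indicator $[t_1\cdots t_\ell = c] = \frac{1}{\#W}\sum_\chi \chi(1)\,\chi(c^{-1}t_1\cdots t_\ell)$ via the regular character, sum over $t_i \in A_i$, and apply the same identity $\sum_{t\in A_i}\rho(t) = \frac{\chi(A_i)}{\chi(1)}\,\mathrm{Id}_{V_\chi}$. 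Given that the result is quoted from the literature, I would keep whichever version appears in the paper to just a few lines.
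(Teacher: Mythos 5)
Your argument is correct and complete: the class sums $z_{A_i}$ are central, Schur's lemma gives the scalar $\chi(A_i)/\chi(1)$ on each irreducible, and extracting the coefficient of $c$ via the regular trace yields exactly the stated formula. The paper itself offers no proof of this proposition, quoting it from the literature (Lando--Zvonkin), so there is nothing to compare against; your group-algebra derivation is the standard classical one and could be kept, as you suggest, to a few lines or simply replaced by the citation.
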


To apply this here, recall that for Coxeter elements $c$ in $W$ a well-generated complex reflection group,
we defined $f_{\ell_1,\ldots,\ell_p}$ as the number of sequences $(t_1,\ldots,t_\ell)$
factoring $c=t_1 \cdots t_\ell$ in which exactly $\ell_i$ of the factors $t_j$ lie in $\RR_i$,
with the factors from $\RR_1$ all coming first in the sequence, those from $\RR_2$ coming next, etc.

\begin{cor}
\label{relevant-Frobenius-formula}
With the above notations, 
$$
f_{\ell_1,\ldots,\ell_p}
=\frac{1}{\# W} 
   \sum_\chi \frac{ \chi(c^{-1}) \chi(\RR_1)^{\ell_1} \ldots \chi(\RR_p)^{\ell_p }}{ \chi(1)^{\ell-1} }
$$
\end{cor}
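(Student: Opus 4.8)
The plan is to derive Corollary~\ref{relevant-Frobenius-formula} as a direct specialization of Proposition~\ref{Frobenius}. The quantity $f_{\ell_1,\ldots,\ell_p}$ counts sequences $(t_1,\ldots,t_\ell)$ with $c = t_1 t_2 \cdots t_\ell$ in which the first $\ell_1$ factors lie in $\RR_1$, the next $\ell_2$ in $\RR_2$, and so on. This is precisely a count of the form appearing in Proposition~\ref{Frobenius}, where we take $\ell = \ell_1 + \cdots + \ell_p$ and set $A_j = \RR_1$ for $1 \le j \le \ell_1$, then $A_j = \RR_i$ for the block of $\ell_i$ positions assigned to orbit $i$. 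The essential point to verify is that each $\RR_i$ is closed under conjugation in $W$: this holds because $\RR_i$ consists of exactly those reflections whose reflecting hyperplane lies in the $W$-orbit $\RR^*_i$, and conjugating a reflection $t$ by $w \in W$ sends its hyperplane $\ker(t-1)$ to $w \cdot \ker(t-1)$, which stays in the same $W$-orbit; since $\RR^* = \sqcup_{i=1}^p \RR^*_i$ is the decomposition into $W$-orbits, each $\RR_i$ is a union of $W$-conjugacy classes of reflections, hence conjugation-closed.

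Having checked the hypothesis, I would substitute directly into the Frobenius formula. The product $\chi(A_1) \cdots \chi(A_\ell)$ collapses: the $\ell_1$ factors equal to $\chi(\RR_1)$ contribute $\chi(\RR_1)^{\ell_1}$, the $\ell_2$ factors equal to $\chi(\RR_2)$ contribute $\chi(\RR_2)^{\ell_2}$, and so on, giving $\prod_{i=1}^p \chi(\RR_i)^{\ell_i}$. The denominator $\chi(1)^{\ell-1}$ is unchanged, and the numerator still carries the factor $\chi(c^{-1})$. This yields exactly the claimed expression
$$
f_{\ell_1,\ldots,\ell_p}
=\frac{1}{\# W}
   \sum_\chi \frac{ \chi(c^{-1}) \chi(\RR_1)^{\ell_1} \cdots \chi(\RR_p)^{\ell_p} }{ \chi(1)^{\ell-1} }.
$$

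There is no real obstacle here; the only subtlety worth a sentence is the remark, already noted in the introduction, that by the Hurwitz-move argument one may equally well prescribe \emph{any} assignment of the $\ell_i$ positions to orbit $i$ rather than the particular ``block'' ordering, and the count is the same --- but for the corollary as stated it suffices to apply Proposition~\ref{Frobenius} with the block ordering, so this observation is not logically needed. The entire proof is thus: confirm each $\RR_i$ is conjugation-closed, then invoke Proposition~\ref{Frobenius} and group equal terms in the product.
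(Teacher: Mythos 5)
Your proof is correct and follows exactly the route the paper intends: the corollary is an immediate specialization of Proposition~\ref{Frobenius} with the $A_j$ taken to be the blocks $\RR_1,\ldots,\RR_p$, once one notes each $\RR_i$ is conjugation-closed because the hyperplane orbits $\RR^*_i$ are $W$-orbits. Your extra remark about arbitrary position assignments matches the paper's later discussion and, as you say, is not needed for the corollary itself.
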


\section{Proofs of Corollary~\ref{hyperplane-orbits-know-h}.}

Before proving Theorem~\ref{main-theorem}, we explain how it specializes to
\eqref{Chapuy-Stump-theorem}, and why this implies Corollary~\ref{hyperplane-orbits-know-h}.

Note that in each summand on the right in Corollary~\ref{relevant-Frobenius-formula}, the order of the factors 
$\chi(\RR_1)^{\ell_1} \ldots \chi(\RR_p)^{\ell_p }$does not matter.
This explains an assertion from the Introduction: 
$f_{\ell_1,\ldots,\ell_p}$ also counts sequences $(t_1,\ldots,t_\ell)$
factoring $c=t_1 \cdots t_\ell$ in which exactly $\ell_i$ of the factors $t_j$ lie in $\RR_i$,
but where one {\it fixes} any of the $\binom{\ell}{\ell_1,\ldots,\ell_p}$ choices of the positions
in which the factors from $\RR_1, \ldots, \RR_p$ should occur.  This has the following implication.

\begin{prop}
\label{specialization-prop}
The exponential generating functions in Theorem~\ref{main-theorem} and \eqref{Chapuy-Stump-theorem}
are related by specialization:
$$
\sum_{\ell \geq 0} f_\ell \frac{x^\ell}{\ell!}
=\left[ 
\sum_{(\ell_1,\ldots,\ell_p) \in \N^p}
  f_{\ell_1,\ldots,\ell_p} \frac{x_1^{\ell_1} \cdots x_p^{\ell_p}}{\ell_1! \cdots \ell_p!}
\right]_{x_i = x \text{ for }i=1,2,\ldots,p}.
$$
\end{prop}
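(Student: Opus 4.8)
The plan is to reduce Proposition~\ref{specialization-prop} to the single multinomial identity
$$
f_\ell = \sum_{\substack{(\ell_1,\ldots,\ell_p)\in\N^p\\ \ell_1+\cdots+\ell_p=\ell}} \binom{\ell}{\ell_1,\ldots,\ell_p}\, f_{\ell_1,\ldots,\ell_p},
$$
and then to observe that this identity is exactly what the claimed specialization of exponential generating functions encodes.

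First I would prove the displayed identity. Its left side, $f_\ell$, counts all sequences $(t_1,\ldots,t_\ell)\in\RR^\ell$ with $c=t_1\cdots t_\ell$. Sort these sequences first by the vector $(\ell_1,\ldots,\ell_p)$ recording how many of the factors $t_j$ lie in each $\RR_i$ (so necessarily $\sum_i \ell_i=\ell$), and then, for a fixed such vector, by which $\ell_i$-element set of positions in $\{1,\ldots,\ell\}$ is occupied by the factors from $\RR_i$. The number of ways to choose these positions is the multinomial coefficient $\binom{\ell}{\ell_1,\ldots,\ell_p}$, and by the position-independence recorded in the paragraph preceding the Proposition --- equivalently, by the invariance of each summand $\chi(\RR_1)^{\ell_1}\cdots\chi(\RR_p)^{\ell_p}$ in Corollary~\ref{relevant-Frobenius-formula} under reordering the factors --- each of the resulting blocks has exactly $f_{\ell_1,\ldots,\ell_p}$ elements. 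Summing over all blocks gives the identity.

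Next I would perform the substitution. Setting $x_i=x$ for all $i$ in the left-hand generating function of Theorem~\ref{main-theorem} gives, after grouping terms by the total degree $\ell=\ell_1+\cdots+\ell_p$ and using $\ell!/(\ell_1!\cdots\ell_p!)=\binom{\ell}{\ell_1,\ldots,\ell_p}$,
$$
\sum_{(\ell_1,\ldots,\ell_p)\in\N^p} f_{\ell_1,\ldots,\ell_p}\,\frac{x^{\ell_1+\cdots+\ell_p}}{\ell_1!\cdots\ell_p!}
= \sum_{\ell\geq 0}\frac{x^\ell}{\ell!}\sum_{\substack{(\ell_1,\ldots,\ell_p)\in\N^p\\ \ell_1+\cdots+\ell_p=\ell}}\binom{\ell}{\ell_1,\ldots,\ell_p}\, f_{\ell_1,\ldots,\ell_p}.
$$
By the identity of the previous paragraph the inner sum equals $f_\ell$, so the right-hand side is $\sum_{\ell\geq 0} f_\ell\, x^\ell/\ell!$, the generating function of \eqref{Chapuy-Stump-theorem}. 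Comparing coefficients of $x^\ell$ completes the proof.

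There is essentially no obstacle here: the argument is pure bookkeeping, and the only point needing care is the position-independence of $f_{\ell_1,\ldots,\ell_p}$, which is exactly the remark established just before the Proposition (and which in turn rests either on the symmetry of the summands in Corollary~\ref{relevant-Frobenius-formula} or on the Hurwitz moves \eqref{Hurwitz-move}). Once the Proposition is in hand, substituting the two explicit product formulas --- the right-hand side of Theorem~\ref{main-theorem} specialized at $x_i=x$ and the right-hand side of \eqref{Chapuy-Stump-theorem} --- and equating them is what will subsequently yield Corollary~\ref{hyperplane-orbits-know-h}; but that comparison is a separate step, not needed for the Proposition itself.
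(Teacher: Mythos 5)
Your proposal is correct and follows the paper's own argument: the paper likewise reduces the Proposition to the identity $f_\ell=\sum_{\ell_1+\cdots+\ell_p=\ell}\binom{\ell}{\ell_1,\ldots,\ell_p}f_{\ell_1,\ldots,\ell_p}$ (justified by the position-independence discussed just before the Proposition, via the symmetry of the summands in Corollary~\ref{relevant-Frobenius-formula}) and then dismisses the substitution $x_i=x$ as routine bookkeeping, which you have simply written out in full.
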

\begin{proof}
The discussion of the preceding paragraph shows that
$$
f_\ell = \sum_{\substack{(\ell_1,\ldots,\ell_p) \in \N^p:\\ \sum_i \ell_i=\ell}} 
  \binom{\ell}{\ell_1,\ldots,\ell_p} f_{\ell_1,\ldots,\ell_p}
$$
and the rest is simple manipulation of summations and factorials.
\end{proof}

From this one can now see why Theorem~\ref{main-theorem} 
and \eqref{Chapuy-Stump-theorem}
imply Corollary~\ref{hyperplane-orbits-know-h}.

\begin{proof}[First proof of Corollary~\ref{hyperplane-orbits-know-h}]
Plugging \eqref{Chapuy-Stump-theorem} into the left of
Proposition~\ref{specialization-prop} and plugging Theorem~\ref{main-theorem} into the right, gives this equality:
$$
\left( e^{\frac{N}{n}x} - e^{-\frac{N^*}{n}x} \right)^n
=
\prod_{i=1}^p \left( e^{\frac{N_i}{n_i}x} - e^{-\frac{N^*_i}{n_i}x} \right)^{n_i}.
$$
Factoring
$
e^{\frac{N}{n}x} - e^{-\frac{N^*}{n}x}
=e^{-\frac{N^*}{n}x}(e^{\frac{N+N^*}{n}x}-1)
=e^{-\frac{N^*}{n}x}(e^{hx}-1)
$
on the left, and similarly on the right, gives
$$
e^{-N^*x} (e^{hx}-1)^n
=  e^{-x \sum_{i=1}^p N_i^*} 
     \prod_{i=1}^p \left( e^{\frac{N_i+N_i^*}{n_i}x}-1 \right)
$$
On the other hand, by definition, $\sum_{i=1}^p N_i^* = N^*$, and hence
$$
(e^{hx}-1)^n = \prod_{i=1}^p \left( e^{\frac{N_i+N_i^*}{n_i}x}-1 \right).
$$
Then the desired equality 
$\frac{N_i+N_i^*}{n_i}=h$ for $i=1,2,\ldots,p$ follows from this claim:
\begin{quote}
{\bf Claim:} A series $P(x)=\prod_{i=1}^p \left( e^{a_i x}-1 \right)$ in $\R[[x]]$ 
uniquely determines the multiset $(a_1,\ldots,a_p)$.  
\end{quote}
One way to see this claim
is to first write 
$$
P(x) = a_1 \cdots a_p \cdot x^p \prod_{i=1} \frac{e^{a_i x}-1}{a_i x}
= a_1 \cdots a_p x^p + o(x^{p+1})
$$
where the last equality holds since $\frac{e^z-1}{z}$ in $\R[[z]]$ has constant term $1$.
Thus at least the product $a_1 \cdots a_p$ is determined by $P(x)$.  
Naming the coefficients $c_k$ in the unique expansion 
$\log \left( \frac{e^z-1}{z}\right) = \sum_{k=0}^\infty c_k z^k$ in $\R[[z]]$ lets
one read off from $P(x)$ all of the {\it power sums} $\{ a_1^k+\cdots+a_p^k \}_{k=1,2,\ldots}$,
via this calculation:
$$
\begin{aligned}
\log \frac{ P(x) }{a_1 \cdots a_p x^p} 
&= \sum_{i=1}^p \log \left( \frac{e^{a_i x}-1}{a_ix} \right)
=  \sum_{i=1}^p \sum_{k=0}^\infty c_k (a_ix)^k
= \sum_{k=0}^\infty c_k x^k (a_1^k+\cdots+a_p^k).
\end{aligned}
$$
But then these power sums uniquely determine the  multiset $(a_1,\ldots,a_p)$.
\end{proof}

As mentioned in the Introduction, the above first proof of 
Corollary~\ref{hyperplane-orbits-know-h}
relies on Theorem~\ref{main-theorem} and \eqref{Chapuy-Stump-theorem},
both proven via case-by-case arguments.  We therefore seek case-free proofs.
The second proof will apply only when $W$ is a {\it real} reflection group.

\begin{proof}[Second proof of Corollary~\ref{hyperplane-orbits-know-h}, for real $W$, but case-free]
Let $W$ be an irreducible real reflection group, with simple reflections $S=\{s_1,\ldots,s_n\}$,
root system  $\Phi$, and corresponding simple roots $\{\alpha_1,\ldots,\alpha_n\}$.
Then it is known that
the Coxeter element $c=s_1 s_2 \cdots s_n$ generates a cyclic subgroup $\langle c \rangle$ of order $h$
acting {\it freely} on the {\it root system}, decomposing $\Phi=\sqcup_{i=1}^n \Phi_i$ into $n$ orbits 
$\Phi_i$.  Furthermore, one has  $\langle c \rangle$-orbit representatives 
$\theta_j:=s_n s_{n-1} \cdots s_{j+1}(\alpha_j)$, so  that $\theta_j$ is the $W$-orbit of $\alpha_j$; 
see\footnote{The results quoted here assume a {\it crystallographic} root system, but avoid
the crystallographic hypothesis in their proof.}  Bourbaki \cite[Chap. VI, \S 11, Prop. 33]{bourbaki}.  
The factorization $c=s_1 s_2 \cdots s_n$ then implies the first equality here
$$
n_i
\quad =\quad 
\#\{ \alpha_1^\perp, \ldots,\alpha_n^\perp\}  \cap \RR^*_i
\quad=\quad
\#\{ \theta_1^\perp,\ldots,\theta_n^\perp\} \cap \RR^*_i
\quad=\quad
\frac{\#\Phi_i}{h}
\quad =\quad
\frac{2N_i}{h}
\quad =\quad
\frac{N_i+N_i^*}{h},
$$
while the third equality comes from the fact that the $\theta_j$ represent the
orbits for the free $\langle c \rangle$-action on $\Phi$.
\end{proof}

The promised third proof of Corollary \ref{hyperplane-orbits-know-h}, due to J. Michel, 
is case-free and even proves a more general assertion.
Recall that a positive integer $d$ is called a {\it regular number} for $W$
if there is a regular element $w$ in $W$ (one with an eigenvector $v$ in $V^\reg$)
having order $d$.  Recall also that it is a consequence of a characterization of regular 
numbers (originally proven case-by-case by
Lehrer and Springer \cite{LehrerSpringer}, and later in a case-free fashion by
Lehrer and Michel \cite{LehrerMichel}) that the Coxeter number $h$ is a regular number
for every well-generated group.

\begin{thm}(J. Michel)
\label{Michel-result}
A complex reflection group $W$ has every regular number $d$ dividing $N_i+N_i^*$ for
each $i=1,2,\ldots,p$.  In particular, when $W$ is irreducible, well-generated and $d=h$, 
one has $\frac{N_i + N_i^*}{h}=n_i$.
\end{thm}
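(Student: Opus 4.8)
The plan is to realize $N_i+N_i^*$ as the degree of a $W$-\emph{invariant} homogeneous polynomial which is nonzero somewhere on $V^{\reg}$, and then to invoke the elementary (Springer) fact that a regular element of order $d$ forces the degree of any such invariant to be divisible by $d$. This works verbatim for an arbitrary complex reflection group $W$.

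In detail, for each $H\in\RR^*$ fix a linear form $\alpha_H\in V^*$ with $\ker\alpha_H=H$, let $e_H$ be the order of the cyclic pointwise stabilizer of $H$, so that $\sum_{H\in\RR^*_i}e_H=\#\RR^*_i+\#\RR_i=N_i+N_i^*$, and put
$$
\Delta_i:=\prod_{H\in\RR^*_i}\alpha_H^{e_H}\ \in\ \C[V].
$$
Since $W$ permutes $\RR^*_i$ with $e_{wH}=e_H$, each $w\in W$ scales $\Delta_i$ by a scalar $\lambda_i(w)$, and $\lambda_i$ is a linear character of $W$. The heart of the matter is that $\lambda_i$ is \emph{trivial}; as reflections generate $W$, it suffices to check $\lambda_i(t)=1$ for a reflection $t$ with hyperplane $H_t$, nontrivial eigenvalue $\theta_t$ of order $e_t$, and line $L_t=\ker(t-\theta_t)$. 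This is a bookkeeping check, and the value $\lambda_i(t)$ is independent of the choice of the $\alpha_H$: the reflection $t$ stabilizes every $H\in\RR^*_i$ with $L_t\subseteq H$, fixing the corresponding $\alpha_H$; it permutes the remaining $H\in\RR^*_i$ other than $H_t$ in $\langle t\rangle$-orbits of full size $e_t$, and along each such orbit $O$ one may choose the $\alpha_H$ compatibly so that $t$ fixes $\prod_{H\in O}\alpha_H$; finally, if $H_t\in\RR^*_i$, the factor $\alpha_{H_t}^{e_{H_t}}$ is only multiplied by $\theta_t^{-e_t}=1$. Hence $\Delta_i\in\C[V]^W$ is homogeneous of degree $N_i+N_i^*$.

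Now let $d$ be a regular number, realized by a regular $w\in W$ of order $d$ with regular eigenvector $v\in V^{\reg}$; since $W$ acts freely on $V^{\reg}$, one gets $wv=\zeta v$ for a \emph{primitive} $d$-th root of unity $\zeta$. Invariance and homogeneity of $\Delta_i$ then give
$$
\Delta_i(v)=\Delta_i(w^{-1}v)=\Delta_i(\zeta^{-1}v)=\zeta^{-(N_i+N_i^*)}\,\Delta_i(v),
$$
while $\Delta_i(v)=\prod_{H\in\RR^*_i}\alpha_H(v)^{e_H}\neq 0$ because $v$ lies off every hyperplane; therefore $\zeta^{N_i+N_i^*}=1$, i.e.\ $d\mid N_i+N_i^*$. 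For the final clause, $h$ is a regular number for every irreducible well-generated $W$ (recalled above), so $h\mid N_i+N_i^*$; writing $N_i+N_i^*=h\,k_i$ with $k_i$ a positive integer and summing over $i$ gives $\sum_i k_i=(N+N^*)/h=n=\sum_i n_i$, so only the termwise equality $k_i=n_i$ remains. This is immediate from Corollary~\ref{hyperplane-orbits-know-h}; alternatively, and case-freely, the nonvanishing $\Delta_i(v)\neq 0$ forces $\Delta_i$ to equal a nonzero constant times $f_n^{k_i}$ plus terms of lower degree in the top basic invariant $f_n$ (of degree $h$), and under Bessis's Lyashko--Looijenga description of the length-$n$ reflection factorizations of a Coxeter element this $f_n$-degree of $\Delta_i$ counts exactly the factors lying in $\RR_i$, namely $n_i$.

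The one genuinely substantive step is the triviality of $\lambda_i$ in the second paragraph: a nontrivial character would yield $d\mid N_i+N_i^*$ only under the non-automatic hypothesis $\lambda_i(w)=1$, so the whole argument hinges on that scalar computation for reflections. A secondary point requiring care is the identification $k_i=n_i$, where the combinatorial meaning of $n_i$ must be reconciled with the invariant theory; this is the only place where heavier machinery (the Lyashko--Looijenga map) enters, and it can be bypassed by quoting Corollary~\ref{hyperplane-orbits-know-h}.
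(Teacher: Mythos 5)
Your proof of the first assertion (the divisibility $d \mid N_i+N_i^*$) is correct, and it takes a genuinely different route from the paper. The paper works inside the braid group $B=\pi_1(V^\reg/W)$: the full twist $\pi$ has abelianized image $(N_1+N_1^*,\ldots,N_p+N_p^*)$, a regular number $d$ produces $\rho\in B$ with $\rho^d=\pi$, and applying the abelianization $B\to\Z^p$ gives $d\cdot\rho^\ab=\pi^\ab$. You instead use the classical orbit discriminants $\Delta_i=\prod_{H\in\RR^*_i}\alpha_H^{e_H}$ of degree $\sum_{H\in\RR^*_i}e_H=N_i+N_i^*$: your reflection-by-reflection verification that $\Delta_i$ is $W$-invariant is the standard one and is correct (the only slip is harmless: the factor $\alpha_{H_t}^{e_{H_t}}$ is scaled by $\theta_t^{-e_{H_t}}$, not $\theta_t^{-e_t}$, but both equal $1$), and the evaluation at a regular eigenvector, using Steinberg's theorem to see that the eigenvalue is a primitive $d$-th root of unity and that $\Delta_i(v)\neq 0$, gives $d\mid\deg\Delta_i$. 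This is more elementary than the paper's argument: it needs no input from Brou\'e--Malle--Rouquier or Bessis for the divisibility statement, only Springer-style invariant theory.

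The second clause, however, is not actually proven by your proposal. Quoting Corollary~\ref{hyperplane-orbits-know-h} is circular in the logical architecture of the paper: the only proof of that corollary valid for all irreducible well-generated $W$ is the first one, which is case-by-case via Theorem~\ref{main-theorem} and the Chapuy--Stump formula, and Theorem~\ref{Michel-result} is stated precisely in order to give a case-free proof of that corollary; so this route either begs the question or re-imports the case-by-case dependence you set out to avoid. Your proposed case-free alternative correctly extracts that the coefficient of $f_n^{k_i}$ in $\Delta_i$ is nonzero, hence $\deg_{f_n}\Delta_i=k_i=(N_i+N_i^*)/h$, but it then simply asserts the crux, namely that this $f_n$-degree equals $n_i$, i.e., counts the factors from $\RR_i$ in a shortest reflection factorization of $c$. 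That identification is exactly the nontrivial content of the ``in particular'' statement, and the paper obtains it from Bessis's element $\delta$ lifting $c$: $\delta$ factors into $n$ braid reflections, the Brou\'e--Malle--Rouquier description of $B^\ab\cong\Z^p$ gives $\delta^\ab=(n_1,\ldots,n_p)$, and since $\delta$ may serve as the $d$-th root $\rho$ of $\pi$ when $d=h$, one gets $h\cdot(n_1,\ldots,n_p)=(N_1+N_1^*,\ldots,N_p+N_p^*)$; see \cite{bessis2006finite} and \cite{BroueMalleRouquier}. A gesture toward the Lyashko--Looijenga map does not substitute for such an argument, so as written the final equality $\frac{N_i+N_i^*}{h}=n_i$ rests on an unproved claim.
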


\noindent
The proof uses the theory of the {\it braid group} $B:=\pi_1(V^\reg/W)$ 
associated to a complex reflection group $W$; see  Brou\'e, Malle, and 
Rouquier \cite{BroueMalleRouquier}, further developments by Bessis \cite{bessis2006finite}, and 
the exposition in Brou\'e \cite{broue2010}.

This theory emphasizes a certain
generating set $\{ s_H\}_{H \in \RR^*}$ for $W$, where $s_H$ is the {\it distinguished reflection}
fixing $H$, the one having $\det(s_H)=\zeta_{\#W_H}$, where $W_H$ is the cyclic
subgroup pointwise fixing $H$.   

Two surjections out of $B$ play an important role here.   First is the surjection 
$B \twoheadrightarrow W$ sending $b \mapsto w$, which arises because the quotient map
$V^\reg \rightarrow V^\reg/W$ is a Galois covering with Galois group $W$;
say that $b$ {\it lifts} $w$ in this situation.  For each hyperplane $H$, there is an important
family of lifts of $s_H$ to elements $s_{H,\gamma}$ in $B$,  called {\it braid reflections};  all
of these braid reflection lifts  $s_{H,\gamma}$ of $s_H$ lie in the same $B$-conjugacy class.

Second is the {\it abelianization} map 
$$
\begin{array}{rcl}
B &\twoheadrightarrow &B^\ab=B/[B,B] \cong \Z^p\\
\gamma & \mapsto & \gamma^\ab.
\end{array}
$$
The composite map $B \rightarrow \Z^p$ can be defined by the following property  \cite[Thm. 2.17]{BroueMalleRouquier}:   if $H$ lies in the $W$-orbit $\RR^*_i$ inside $\RR^*$, then each
braid reflection $s_{H,\gamma}$ lifting $s_H$ maps to the $i^{th}$ standard basis vector of $\Z^p$.

\begin{proof}[Proof of Theorem~\ref{Michel-result}.]
There is a special central element of $B$, denoted $\pi$ in \cite[Not. 2.3]{BroueMalleRouquier}, and called the {\it full twist} $\tau$ in \cite[Def. 6.12]{bessis2006finite}, with the following abelianized image \cite[Lem. 2.22(2), Cor. 2.26]{BroueMalleRouquier}: 
$$
\pi^\ab=( N_1+N_1^*,\ldots,N_p+N_p^*).
$$
When $d$ is a regular number, there exists $\rho$ in $B$ with the property
 $\rho^d=\pi$, see \cite[Prop. 5.24]{broue2010}.
Consequently, any such $\rho$ has $\rho^\ab$ satisfying
\begin{equation}
\label{Michel's-observation}
d \cdot \rho^\ab=(\rho^d)^\ab=\pi^\ab=( N_1+N_1^*,\ldots,N_p+N_p^*) \quad
 \text{ in } \Z^p
\end{equation}
proving the first assertion of the theorem, that $d$ divides $N_i+N_i^*$ for $i=1,2,\ldots,p$.

In the special case where $W$ is irreducible, well-generated, and $d=h$, Bessis defined \cite[Def. 6.11]{bessis2006finite}
a certain element  $\delta$ in $B$, which lifts a Coxeter element $c$ in $W$
 (see \cite[Lemmas 6.13, 7.3]{bessis2006finite}),
and which has a factorization $\delta= s_{H_1,\gamma_1} \cdots  s_{H_n,\gamma_n}$ into $n$
braid reflections; see \cite[Rmk. 6.10, Lem. 7.4]{bessis2006finite}.  Thus, by 
our earlier definition of $n_i$, and the aforementioned characterization of the abelianization map, one has 
$$
\delta^\ab=\left( s_{H_1,\gamma_1} \cdots  s_{H_n,\gamma_n} \right)^\ab=(n_1,\ldots,n_p).
$$ 
Consequently, in this case 
\eqref{Michel's-observation} tells us that
$$
 h\cdot (n_1,\ldots,n_p) = h \cdot \delta^{\ab} =( N_1+N_1^*,\ldots,N_p+N_p^*) \quad \text{ in } \Z^d,
$$
showing the desired equality $n_i=\frac{N_i + N_i^*}{h}$ for $i=1,2,\ldots,p$.
\end{proof}

\section{Proof of Theorem \ref{main-theorem}}
\label{proof-section}

As explained in the caveat following Theorem~\ref{main-theorem}, 
the number $p$ of $W$-orbits of hyperplanes is either $1$ or $2$.  When $p=1$, the theorem is equivalent to \eqref{Chapuy-Stump-theorem}, and so there is nothing further to prove.  
The irreducible, well-generated groups $W$ having $p=2$ appear 
in the table following the caveat, with only two infinite families $ G(m,m,2), G(r,1,n)$, and several exceptional groups.  Just as in \cite{chapuy2014counting}, one can use Frobenius's Proposition~\ref{Frobenius} to verify the
table entries-- we give here the general calculations for the two infinite families in the next two subsections.  
The exceptional cases were handled via computer, accessing in {\tt SAGE} (via
the {\tt Gap3} package {\tt Chevie}, see \cite{chevie}) the irreducible complex reflection groups and their character tables; we discuss the exceptional cases no further here.

\subsection{The dihedral group $G(m,m,2)$ for even $m$.}
The group $G(m,m,2)$ turns out to be the complexification of
a real reflection group, the {\it dihedral group} of type $I_2(m)$ with
Coxeter presentation 
$$
W=\langle s_1,s_2: s_1^2=s_2^2=e=(st)^m \rangle.
$$
Here the sets $\RR^*, \RR$ of reflecting hyperplanes (lines) and reflections
both have size $m$.  Both $\RR^*, \RR$ have a single $W$-orbit
when $m$ is odd, but when $m$ is even,
they decompose two orbits of size $N_1^*=N_1=\frac{m}{2}=N_2=N_2^*$,
indexed here so that $s_i$ lies in $\RR_i$ for $i=1,2$.  Furthermore, $c=s_1 s_2$, and
$n_1=n_2=1$.

Irreducible $W$-representations have dimension one or two,
and for every two-dimensional irreducible character $\chi$,
one has vanishing character values $\chi(s_1)=\chi(s_2)=0$.
Hence only one-dimensional characters $\chi$ contribute in the formula 
Corollary~\ref{relevant-Frobenius-formula} for $f_{\ell_1,\ell_2}$.
For $m$ even, there are four such characters, namely
$\{\mathbf{1}, \,\, \chi_1, \,\, \chi_2, \,\, \chi_1 \chi_2\}$,
with values determined by $\chi_i(s_j)=-1$ if $i=j$ and $\chi_i(s_j)=+1$ if $i \neq j$, for $i,j \in \{1,2\}$.

Using the $p=2$ case of Corollary~\ref{relevant-Frobenius-formula} then gives the following:

$$
\begin{aligned}
\#W f_{\ell_1,\ell_2} 
&= \sum_{\chi} \frac{ \chi(c^{-1}) 
                      \left( \frac{m}{2} \chi(s) \right)^{\ell_1}
                      \left( \frac{m}{2} \chi(t) \right)^{\ell_2} }
                       {\chi(1) ^{\ell_1+\ell_2-1}} 
=  \left( \frac{m}{2} \right)^{\ell_1+\ell_2} 
     \sum_{\chi} \frac{ \chi(c^{-1}) 
                      \chi(s)^{\ell_1}
                       \chi(t)^{\ell_2} }
                       {\chi(1) ^{\ell_1+\ell_2-1}} \\
&= \left( \frac{m}{2} \right)^{\ell_1+\ell_2}
    \left( 1+ (-1)^{\ell_1-1} + (-1)^{\ell_2-1} 
     + (-1)^{\ell_1+\ell_2-2} \right)
 = \left( \frac{m}{2} \right)^{\ell_1+\ell_2}
    \left( 1- (-1)^{\ell_1}\right) \left(1- (-1)^{\ell_2}\right)
\end{aligned}
$$
and hence, in agreement with Theorem~\ref{main-theorem}, one calculates
$$
\begin{aligned}
\#W \sum_{(\ell_1, \ell_2) \in \N^2} 
  f_{\ell_1,\ell_2} \frac{x^{\ell_1} y^{\ell_2}}{\ell_1! \ell_2!}
 &=\sum_{(\ell_1, \ell_2) \in \N^2}
      \left( \frac{m}{2} \right)^{\ell_1+\ell_2}
      \left( 1-(-1)^{\ell_1}\right) \left(1- (-1)^{\ell_2}\right)
      \frac{x^{\ell_1} y^{\ell_2}}{\ell_1! \ell_2!} \\
  &=\left( 
    \sum_{\ell_1 \in \N}
      \left( 1- (-1)^{\ell_1}\right) 
      \left( \frac{m}{2} \right)^{\ell_1} \frac{x^{\ell_1}}{\ell_1!} 
  \right)
  \left( 
    \sum_{\ell_1 \in \N}
      \left( 1- (-1)^{\ell_2}\right) 
      \left( \frac{m}{2} \right)^{\ell_2} \frac{y^{\ell_2}}{\ell_2!} 
  \right)\\
&=  \Big(e^{\frac{m}{2}x} - e^{-\frac{m}{2}x}\Big)\Big( e^{\frac{m}{2}y} - e^{-\frac{m}{2}y} \Big).
\end{aligned}
$$

\subsection{The monomial groups $G(r,1,n)$ for $r \geq 2$}

The group $W = G(r,1,n)$ is the set of $n \times n$ matrices with one nonzero entry in each row and column, and that nonzero entry is an $r^{th}$ root-of-unity in $\C$, a power of the primitive root $\zeta_r=e^{\frac{2\pi i}{r}}$.  
The reflecting hyperplane $W$-orbit decomposition  is
$
\RR^*=\RR_1^* \sqcup \RR_2^*
$
where
$$
\begin{aligned}
\RR_1^*&=  \{ x_i=0: 1 \leq i \leq n\}, \quad \text{ so } N_1^*=n,\\
\RR_2^*&=\{ x_i = \zeta_r^k x_j: 1 \leq i < j \leq n, \text{ and } 0 \leq k \leq r-1 \},
 \quad \text{ so } N_2^*=r\binom{n}{2}.
 \end{aligned}
 $$
The accompanying decomposition 
of the reflections
$\RR=\RR_1 \sqcup \RR_2$   has $\RR_1$ consisting of the 
$N_1=(r-1)n$ reflections that scale one of the $n$ coordinates by $\zeta_r^\ell$ for some $1 \leq \ell \leq r-1$, and fix all other coordinates, while $\RR_2$ is the collection of $N_2=N_2^*=r\binom{n}{2}$ 
order two reflections in each of the hyperplanes of $\RR_2^*$.

To finish the computation, we use the character-theoretic analysis already detailed in \cite[\S 5.3]{chapuy2014counting}.   There the authors show that  the only $W$-irreducible characters $\chi$ which do not vanish on $c^{-1}$ form a two-parameter family denoted
$
\{ \chihook{q}{k}\}
$
where $0 \leq q \leq r-1$ and $0 \leq k \leq n-1$,
with these values:
$$
 \chihook{q}{k}(1)  = \binom{n-1}{k},
 \quad
\chihook{q}{k}(c^{-1}) = (-1)^k \zeta_r^{-q},
\quad
\displaystyle \frac{ \chihook{q}{k}(t) }{ \chihook{q}{k}(1) } = 
\begin{cases} 
\zeta_r^{q\ell} & \text{ if }t \in \RR_1 \text{ and } \det(t)=\zeta_r^\ell,\\
\frac{n-1-2k}{n-1}& \text{ if }t \in \RR_2.\\
\end{cases}
$$
Using the $p=2$ case of Corollary~\ref{relevant-Frobenius-formula}, one has
$$
\begin{aligned}
\#W \cdot f_{\ell_1,\ell_2} 
&=
 \sum_\chi \frac{ \chi(c^{-1}) \chi(\RR_1)^{\ell_1}  \chi(\RR_2)^{\ell_2 }}
               { \chi(1)^{\ell_1+\ell_2-1} }
= \sum_\chi \chi(1) \cdot \chi(c^{-1})\cdot  \left(\frac{\chi(\RR_1)}{\chi(1)}\right)^{\ell_1}
                            \left(\frac{\chi(\RR_1)}{\chi(1)}\right)^{\ell_1}\\
& =\sum_{k=0}^{n-1} \sum_{q=0}^{r-1} \quad \binom{n-1}{k}  \cdot (-1)^k\zeta_r^{-q} \cdot 
            \left( n(\zeta_r^q+\zeta_r^{2q}+\cdots+ \zeta_r^{(r-1)q}) \right)^{\ell_1} 
             \left(\frac{nr(n-1-2k)}{2}\right)^{\ell_2} \\
&=    \left(  n^{\ell_1} \sum_{q=0}^{r-1} \zeta_r^{-q}  
                       \left(  \sum_{\ell=1}^{r-1} \zeta_r^{q\ell} \right)^{\ell_1} 
          \right)
          \left(
              \sum_{k=0}^{n-1}\binom{n-1}{k}(-1)^k\left(\frac{nr(n-1-2k)}{2}\right)^{\ell_2}
          \right)     
\end{aligned}
$$
Note that 
$$
\sum_{\ell=1}^{r-1} \zeta_r^{q\ell} 
=-1+ \sum_{\ell=0}^{r-1} \zeta_r^{q\ell}
=
\begin{cases}
r-1 &\text{ if }q=0,\\
-1& \text{ if  }q=1,2,\ldots,r-1,
\end{cases}
$$
and hence 
$$
\sum_{q=0}^{r-1} \zeta_r^{-q}  
                       \left(     \sum_{\ell=1}^{r-1} \zeta_r^{q\ell}       \right)^{\ell_1}
= (r-1)^{\ell_1} - (-1)^{\ell_1}.
$$
Therefore one can check agreement with Theorem~\ref{main-theorem} as follows:
$$
\#W \sum_{(\ell_1, \ell_2) \in \N^2} 
     f_{\ell_1,\ell_2} \frac{x^{\ell_1} y^{\ell_2}}{\ell_1! \ell_2!}  
 =   \left( \sum_{\ell_1  \in \N} 
    \frac{x^{\ell_1}}{\ell_1!} 
       n^{\ell_1}\left( (r-1)^{\ell_1} - (-1)^{\ell_1}  \right) 
       \right)
     \left( \sum_{\ell_2  \in \N} 
            \frac{y^{\ell_2}}{\ell_2!}   \sum_{k=0}^{n-1}\binom{n-1}{k}(-1)^k\left(\frac{nr(n-1-2k)}{2}\right)^{\ell_2}
     \right)
     $$
in which the first factor on the right is
$$
\sum_{\ell_1 \in \N} 
   \frac{x^{\ell_1}}{\ell_1!} n^{\ell_1} \left( (r-1)^{\ell_1} - (-1)^{\ell_1} \right)
=e^{(r-1)nx}-e^{-nx},
$$
consistent with $n_1=1,N_1=(r-1)n$ and  $N_1^*=n$, while the second factor is
$$
\begin{aligned}
&\sum_{\ell_2 \in \N}
\frac{y^{\ell_2}}{\ell_2!} 
              \sum_{k=0}^{n-1}\binom{n-1}{k}(-1)^k 
                \left(\frac{nr(n-1-2k)}{2}\right)^{\ell_2} \\
&=\sum_{k=0}^{n-1}\binom{n-1}{k}(-1)^k
    \sum_{\ell_2 \in \N} \frac{y^{\ell_2}}{\ell_2!} 
      \left(\frac{nr(n-1-2k)}{2}\right)^{\ell_2} \\
&=\sum_{k=0}^{n-1}\binom{n-1}{k}(-1)^k
    e^{ \frac{nr(n-1-2k)}{2} y} \\
& =\left( e^{\frac{nr}{2}y} \right)^{n-1}  \sum_{k=0}^{n-1}\binom{n-1}{k} ( -e^{-nry} )^k  
=\left( e^{\frac{nr}{2}y} \right)^{n-1} 
      (1- e^{-nry} )^{n-1} 
 = \left( e^{\frac{nr}{2}y} - e^{\frac{-nr}{2}y}\right)^{n-1},
\end{aligned}
$$
consistent with $n_2=n-1$ and $N_2=N_2^*=r\binom{n}{2}=(n-1)\frac{nr}{2}$.

This complete the proof for $W=G(r,1,n)$, and the proof of Theorem~\ref{main-theorem}.

\section*{Acknowledgments} 
Research supported by NSF grants DMS-1148634 and DMS-1601961.
Work of the second author was carried out under the auspices of the 
2017 summer REU program at the School of Mathematics, University of Minnesota, 
Twin Cities.  The authors thank Craig Corsi, Theo Douvropoulos, and Joel Lewis 
for helpful comments, and they thank Jean Michel for allowing them to include his 
proof of Corollary~\ref{hyperplane-orbits-know-h}.


\begin{thebibliography}{99}


\bibitem{bourbaki}
N. Bourbaki,
Lie groups and Lie algebras, Chapters 4–6. 
Elements of Mathematics. Springer-Verlag, Berlin, 2002.


\bibitem{bessis2006finite}
\bysame,
Finite complex reflection arrangements are $K(\pi,1)$. 
{\it Ann. of Math.} {\bf 181} (2015), 809--904. 

\bibitem{broue2010}
M. Brou\'e, 
Introduction to complex reflection groups and their braid groups. 
{\it Lecture Notes in Mathematics} {\bf 1988}. Springer-Verlag, Berlin, 2010.

\bibitem{BroueMalleRouquier}
M. Brou{\'e}, G. Malle and R. Rouquier, 
Complex reflection groups, braid groups, Hecke algebras,
{\it J. Reine Angew. Math.}{\bf 500} (1998), 127--190. 

\bibitem{chapuy2014counting}
G. Chapuy and C. Stump,
Counting factorizations of Coxeter elements into products of reflections. (English summary) 
{\it J. Lond. Math. Soc.} {\bf 90} (2014), 919--939. 

\bibitem{Coxeter}
H.S.M. Coxeter, 
Regular complex polytopes, 2nd edition. 
Cambridge University Press, Cambridge, 1991.

\bibitem{LandoZvonkin}
S. K. Lando and A.K. Zvonkin, 
Graphs on surfaces and their applications. 
{\it Encyclopaedia of Mathematical Sciences} {\bf 141}.
Springer-Verlag, Berlin, 2004.

\bibitem{LehrerMichel}
G.I. Lehrer and J. Michel,
Invariant theory and eigenspaces for unitary reflection groups. 
{\it C. R. Math. Acad. Sci. Paris} {\bf 336} (2003), 795--800. 

\bibitem{LehrerSpringer}
G.I. Lehrer and T.A. Springer, 
Reflection subquotients of unitary reflection groups. 
{\it Canad. J. Math.} {\bf 51} (1999), 1175--1193. 

\bibitem{unitary}
G.I. Lehrer and D.E. Taylor,
Unitary reflection groups. 
{\it Australian Mathematical Society Lecture Series} {\bf 20}. 
Cambridge University Press, 2009.

\bibitem{chevie}
J. Michel,
The development version of the CHEVIE package of GAP3.
{\it J. Algebra} {\bf 435} (2015), 308--336.

\bibitem{jm}
\bysame,
Deligne-Lusztig theoretic derivation for Weyl groups of the number of 
reflection factorizations of a Coxeter element. 
{\it Proc. Amer. Math. Soc.} {\bf 144} (2016), 937--941. 

\end{thebibliography}
\end{document}